\let\savedegree\bigtimes
\let\bigtimes\relax
\let\bigtimes\savedegree
\setlist[enumerate]{labelsep=*, leftmargin=1.5pc}
\setlist[enumerate]{label=\normalfont(\roman*), ref=\roman*}
\newtheorem{thm}{Theorem}[section]
\newtheorem{lemma}[thm]{Lemma}
\newtheorem{prop}[thm]{Proposition}
\newenvironment{customthm}[1]
  {\innercustomthm}
  {\endinnercustomthm}
\theoremstyle{definition}
\newtheorem{notation}[thm]{Notation}
\newtheorem{remark}[thm]{Remark}
\newtheorem{definition}[thm]{Definition}
\newtheorem*{definition*}{Definition}
\newtheorem*{notation*}{Notation}
\newtheorem*{organization*}{Organization}
\newtheorem*{theorem*}{Theorem}
\numberwithin{equation}{section}
\newcommand{\kk}{\mathbf{k}}
\newcommand{\lex}{\text{lex}}
\newcommand{\inn}{\text{in}}
\newcommand{\rad}{\text{rad}}
\newcommand{\sat}{\text{sat}}
\newcommand{\GL}{\text{GL}}
\newcommand{\HH}{\mathcal{H}}
\newcommand{\PP}{\mathbf{P}}
\newcommand{\XX}{\mathcal{X}}
\newcommand{\YY}{\mathcal{Y}}
\DeclareMathOperator{\Hilb}{Hilb}
\DeclareMathOperator{\Proj}{Proj}
\DeclareMathOperator{\Spec}{Spec}
\begin{document}
%-------------------------------------------------------------------------------
\author[R.\,Ramkumar]{Ritvik~Ramkumar}
\address{Department of Mathematics\\University of California at Berkeley\\Berkeley, CA\\94720\\USA}
\email{ritvik@math.berkeley.edu}
%-------------------------------------------------------------------------------
\keywords{Hilbert Scheme, Radius, Lexicographic ideal}
\subjclass[2020]{14C05, 13D02}
%-------------------------------------------------------------------------------
\title{A Hilbert scheme of radius two}
\maketitle
%-------------------------------------------------------------------------------
\begin{abstract} We give an explicit example of a Hilbert scheme whose incidence graph has radius two.
\end{abstract}

\section{introduction}
Consider the Hilbert scheme $\Hilb^{P(t)} \PP^n$ that parameterizes subschemes of $\PP^n_{\kk}$ with a fixed Hilbert polynomial $P(t)$. A celebrated theorem of Reeves bounds the radius of the Hilbert scheme by $\deg P(t)+1$. More precisely, one can associate to $\Hilb^{P(t)} \PP^n$ its incidence graph: to each irreducible component we assign a vertex, and we connect two vertices if the corresponding components intersect. 
\begin{definition*}
Define the \textbf{distance} $d(C,D)$ between two components $C,D$ to be the number of edges in the shortest path linking the corresponding vertices. Let $r_D = \max \{ d(C,D) :C \text{ a component of }\linebreak \Hilb^{P(t)}  \PP^n\}$, and define the \textbf{radius} of the Hilbert scheme to be $\rad(\Hilb^{P(t)} \PP^n) =\min\{r_D : D \text{ a component of }  \Hilb^{P(t)} \PP^n\}$. We identify any component $D$ for which $\rad(\Hilb^{P(t)} \PP^n) = r_D$ as a \textbf{center} of the graph.
\end{definition*}

Every Hilbert scheme is connected \cite{Hartshorne} and contains a generically smooth irreducible component called the lexicographic component \cite{rs}. By studying this component in relation to other components Reeves established,
\begin{thm}[{\cite[Theorem 7]{reeves}}] Consider the Hilbert scheme $\Hilb^{P(t)} \PP^n$ and let $d = \deg P(t)$ be the dimension of the parameterized subschemes. Then the distance from any component to the lexicographic component is at most $d +1$. In particular, the radius of the Hilbert scheme is at most $d+1$.
\end{thm}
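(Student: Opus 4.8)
The plan is to replace an arbitrary point by a Borel-fixed monomial point lying in the \emph{same} component, and then to march from such a point to the lexicographic component through a controlled chain of pairwise-intersecting components. First I would reduce to monomial ideals via the generic initial ideal. Given a component $C$ with general point $[X]$, choose generic coordinates and a term order and form the Gr\"obner degeneration $t \mapsto [\lambda(t)\cdot X]$ induced by a one-parameter subgroup $\lambda$ of $\GL_{n+1}$; its flat limit is the point $[\inn_{\prec}(I_X)]$, whose ideal is the generic initial ideal and is therefore Borel-fixed (strongly stable in characteristic zero). Since $\GL_{n+1}$ is connected, it cannot interchange the irreducible components of $\Hilb^{P(t)}\PP^n$, so the entire family $\{[\lambda(t)\cdot X]\}$ consists of $\GL_{n+1}$-translates of $[X]$ and hence lies in $C$; as $C$ is closed, the limit $[\inn_{\prec}(I_X)]$ lies in $C$ as well. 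Thus every component contains a Borel-fixed point, and it suffices to bound the distance from the component of an arbitrary Borel-fixed ideal to the lexicographic component.

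Next I would connect Borel-fixed ideals to the lex ideal by explicit deformations, in the spirit of Hartshorne's connectedness proof but bookkeeping the components. Using the strongly stable structure one builds, for suitable pairs of Borel-fixed ideals $I,J$ with the same Hilbert polynomial, a flat family over $\mathbb{A}^1$ (or $\PP^1$) whose special fibers are $V(I)$ and $V(J)$. Such a family is irreducible and so lies in a single component $C'$, which then contains both $[I]$ and $[J]$; consequently the monomial points appearing along a chain of such moves all lie on common components, and consecutive components share a point and are therefore adjacent in the incidence graph. The lexicographic ideal $L^{\sat}$ is the terminal, most degenerate Borel-fixed ideal with polynomial $P(t)$ and sits on the lexicographic component, so every such chain can be steered to that component.

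The crux is to bound the number of edges by $d+1$, and here I would induct on $d=\deg P(t)$. A general hyperplane section carries a $d$-dimensional subscheme to a $(d-1)$-dimensional one and is compatible with the Borel/lex structure (the generic initial ideal of a general section is read off from $\inn_{\prec}(I_X)$ by specializing the last variable), so the deformations needed to repair the top-dimensional behaviour of a Borel ideal can be collected into a single layer, after which the inductive hypothesis disposes of the residual $(d-1)$-dimensional problem; the base case $d=0$ yields distance at most $1$. Organizing the elementary moves by the dimension $0,1,\dots,d$ they affect produces at most $d+1$ layers, hence a path of length at most $d+1$ from any component to the lex component. The main obstacle I anticipate is exactly this final bookkeeping: showing that \emph{all} moves required at a fixed dimensional level can be absorbed into a single incidence-graph edge, so that the total is $d+1$ rather than an a priori unbounded count.
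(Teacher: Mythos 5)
This statement is not proved in the paper at all: it is quoted from Reeves \cite{reeves}, so your proposal has to be measured against her argument. Your opening reduction is correct and is the standard one: the generic initial ideal of the general point of a component $C$ is Borel-fixed, and since the Gr\"obner degeneration is a limit of $\GL_{n+1}$-translates and $\GL_{n+1}$ is connected, the limit stays in $C$; likewise, an irreducible flat family through two points does place them on a common component, so consecutive ideals in such a chain give adjacent vertices of the incidence graph. The genuine gap is in your third paragraph, and you have in fact named it yourself: the entire content of Reeves' theorem is that all the elementary moves needed at a fixed ``dimensional level'' can be absorbed into a single edge, and your proposal offers no mechanism for this. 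Worse, the mechanism you do propose --- induction on $d$ via a general hyperplane section --- cannot work as stated: there is no morphism $\Hilb^{P(t)}\PP^n \to \Hilb^{\Delta P(t)}\PP^{n-1}$ (hyperplane sections are not flat over all of the Hilbert scheme, and a hyperplane that is general for one subscheme is not general for another), so components and intersections of components downstairs say nothing about components and intersections upstairs. The present paper illustrates exactly this asymmetry: Lemma \ref{H2} uses a hyperplane section to rule out an incidence in $\HH^5$ by passing to $\HH^4$, i.e.\ the section transfers \emph{negative} information (non-incidence) in one direction only, never the positive information (adjacency) that your induction would need to lift.

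What Reeves actually does is stay in $\PP^n$ and work directly with Borel-fixed ideals. Given a saturated Borel-fixed ideal $I$ she deforms it by the \emph{distraction} (replacing powers of variables by products of distinct general linear forms), which is an irreducible family, and then takes the lexicographic generic initial ideal of the result; the composite operation produces a new Borel-fixed ideal lying on a common component with $I$, hence costs at most one edge. The combinatorial heart of the proof is that each iteration makes the ideal agree with the lexicographic ideal one dimension deeper, measured by iterated partial saturations --- the same invariant $\sat_{x_4,x_5}(\,\cdot\,)$ that this paper invokes from \cite[Theorem 11]{reeves} in Lemma \ref{BorelH} --- and only $d+1$ such levels are relevant for a $d$-dimensional subscheme. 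That is where the bound $d+1$ comes from, and it is precisely the bookkeeping your proposal defers. In short: correct skeleton and correct first reduction, but the crux (one edge per dimension) is asserted rather than proved, and the inductive device you propose for it does not transfer between the two Hilbert schemes.
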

It is natural to ask to what extent Reeves' bound on the radius is sharp. As far as we are aware, no explicit example of a Hilbert scheme with radius larger than one has appeared in the literature. The goal of this note is to give an example of such a Hilbert scheme. 

Since the lexicographic component is, in general, the best understood component, one might start by studying the components which meet the lexicographic component. However, there are two immediate obstacles. The first is that it is difficult to determine all the components of the Hilbert scheme. Secondly, it is even more difficult to prove that two components of the Hilbert scheme do not meet. Even if we succeeded in determining which components meet the lexicographic component, the lexicographic component might not be the center of the incidence graph. We overcome these problems by working with an infinite family of Hilbert schemes where we completely understand a component different from the lexicographic component. For simplicity, we assume $\kk$ is an algebraically closed field of characteristic zero.

\begin{definition}
Let $n\geq 3$ and let $\HH^n= \Hilb^{P_n(t)} \, \PP^n$ be the Hilbert scheme corresponding to the Hilbert polynomial $P_n(t) = 2\binom{t+n-2}{n-2} - \binom{t+n-4}{n-4}$. Let $\mathcal{H}^n_1$ denote the irreducible component of $\HH^n$ whose general member parameterizes a pair of codimension two linear spaces meeting transversely in $\PP^n$. Let $\HH^n_2$ denote the component whose general member parameterizes $Q \cup \Lambda_{n-3}$ where $Q$ is a quadric $(n-2)$-fold and $\Lambda_{n-3}$ is a codimension three linear space such that $Q \cap \Lambda_{n-3}$ is a codimension four linear space. 
\end{definition}

\begin{thm}[{\cite[Theorem 1.1]{ccn}}\footnote{Our notation differs from \cite{ccn}; in their paper the authors use $H_n$ to denote the component $\HH^n_1$.}] \label{ccnthm} Let $n\geq 3$. The only component of $\HH^n$ that $\HH^n_1$ meets is $\HH^n_2$.
\end{thm}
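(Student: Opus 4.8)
The plan is to turn the statement into a local study of $\HH^n$ along $\HH^n_1$. Since $\HH^n_1$ is the closure of the locus of transverse pairs $\Lambda_1\cup\Lambda_2$ — an irreducible family dominated by an open subset of $\Gr(n-1,n+1)\times\Gr(n-1,n+1)$ — the first step is to verify that $\HH^n_1$ is generically smooth by computing $h^0(\mathcal{N}_{Z/\PP^n})=\dim\HH^n_1=4n-4$ for a general transverse pair $Z$. Granting this, at any point where $\HH^n$ is smooth as a scheme exactly one component passes through, so any component $D\neq\HH^n_1$ meeting $\HH^n_1$ must do so inside $\operatorname{Sing}(\HH^n)\cap\HH^n_1$. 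The whole problem thus reduces to identifying this locus and determining which components pass through its points.

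The second step is to describe $\operatorname{Sing}(\HH^n)\cap\HH^n_1$ geometrically. I would analyze how a transverse pair degenerates inside $\HH^n_1$: the essential degeneration occurs when $\Lambda_1$ and $\Lambda_2$ become coplanar, i.e.\ come to lie in a common hyperplane $H\cong\PP^{n-1}$, so that $\dim(\Lambda_1\cap\Lambda_2)$ jumps from $n-4$ to $n-3$. Writing down an explicit one-parameter family realizing this (most cleanly via an initial-ideal computation on $I_{\Lambda_1\cup\Lambda_2}$), one sees that to preserve $P_n(t)$ the flat limit must acquire an embedded component supported along the excess intersection; a Hilbert-polynomial bookkeeping shows this embedded part is an $(n-3)$-plane's worth of structure. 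I would then check that the resulting limit is precisely a degenerate member of $\HH^n_2$, namely the one where the smooth quadric $Q$ breaks into the reducible quadric $\Lambda_1\cup\Lambda_2\subset H$ while $\Lambda_{n-3}$ collapses onto the embedded locus. This simultaneously shows $\HH^n_1\cap\HH^n_2\neq\emptyset$ and gives $\operatorname{Sing}(\HH^n)\cap\HH^n_1\subseteq\HH^n_1\cap\HH^n_2$.

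The hard part is the third step: ruling out a third component through the degenerate locus. For a limit scheme $[Z]$ as above I would compute $\dim T_{[Z]}\HH^n=h^0(\mathcal{N}_{Z/\PP^n})$ exactly; the difficulty is that $Z$ is non-reduced with an embedded component, so $\operatorname{Hom}(I_Z,\mathcal{O}_Z)$ must be analyzed carefully — exploiting the $\GL_{n+1}$-action fixing the configuration to reduce to a normal form — separating the contributions of the reduced quadric from those of the embedded part. The goal is to show this tangent-space dimension is accounted for entirely by directions tangent to $\HH^n_1$ and $\HH^n_2$, that is $T_{[Z]}\HH^n=T_{[Z]}\HH^n_1+T_{[Z]}\HH^n_2$. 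Once the tangent space is exhausted by these two branches, no third component can contain $[Z]$, and together with Step~2 this forces every component meeting $\HH^n_1$ to equal $\HH^n_2$.

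A convenient way to organize Step~3 is to stratify $\operatorname{Sing}(\HH^n)\cap\HH^n_1$ by the isomorphism type of the limit (by $\dim(\Lambda_1\cap\Lambda_2)$ and by the position of the embedded part), treating only the generic point of each stratum and propagating the conclusion by the group action; I expect a single deepest stratum to carry the entire subtlety. The main obstacle throughout is the explicit control of $H^0(\mathcal{N}_{Z/\PP^n})$ at these non-reduced limits: proving the tangent space is no larger than the two expected branches predict is exactly the "components do not meet" difficulty flagged in the introduction, and it is where essentially all of the work resides.
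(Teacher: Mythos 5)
The paper does not prove this statement at all: Theorem~\ref{ccnthm} is quoted verbatim from \cite[Theorem~1.1]{ccn}, so your proposal has to be judged as a reconstruction of that external proof. Judged that way, it has a genuine gap at exactly the step you identify as the hard one. In Step~3 you conclude that ``once the tangent space is exhausted by these two branches, no third component can contain $[Z]$.'' That implication is false: knowing $T_{[Z]}\HH^n = T_{[Z]}\HH^n_1 + T_{[Z]}\HH^n_2$ does not preclude a third component whose tangent space at $[Z]$ happens to be contained in that sum. Three distinct lines through the origin in the affine plane already give a counterexample --- the tangent space of the union at the origin is spanned by the tangent directions of any two of the lines, yet a third component passes through. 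Tangent-space computations can locate $\operatorname{Sing}(\HH^n)\cap\HH^n_1$ (your Step~2) and can show that two given components meet transversally there, but they cannot by themselves \emph{count} the branches; for that you need an actual local model of $\HH^n$ at the degenerate point (local equations of the deformation space, exhibited with exactly two minimal primes) or some other mechanism. So the criterion you propose for finishing the argument is not a valid criterion, and the core difficulty remains open in your outline.

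You are also missing the tool that makes the problem tractable, which both \cite{ccn} and the present paper rely on: Borel-fixed points. Every component of $\HH^n$, and every nonempty intersection of two components, contains a Borel-fixed ideal, and by \cite[Theorem~C]{ritvik} the component $\HH^n_1$ contains exactly one, namely $(x_0^2,x_0x_1,x_0x_2,x_1^2)$ (this is the ideal $I_1$ of Lemma~\ref{BorelG} for $n=4$ and $I_9$ of Lemma~\ref{BorelH} for $n=5$). Hence any component meeting $\HH^n_1$ must pass through this single explicit monomial ideal, which collapses your stratification of $\operatorname{Sing}(\HH^n)\cap\HH^n_1$ to the study of $\HH^n$ at one point, with no need for the general-position degeneration analysis of Step~2. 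This localization is precisely how the rest of the paper argues (see Lemmas~\ref{BorelG} and~\ref{H2}); without it, and without a valid replacement for the tangent-space criterion in Step~3, your plan does not yet yield the theorem.
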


Here is the main theorem of this note:

\begin{customthm}{A}
The radius of the Hilbert scheme $\HH^5$ is two. Moreover, the lexicographic component is not the center of the incidence graph.
\end{customthm}
With a bit more analysis, that we omit, we can describe a large portion of the incidence graph. In particular, other than the six known components  of $\HH^5$ \cite[Remark 2.7]{ccn} we found another component and we were able to determine how these components met one another. Moreover, we checked that all of these components are generically smooth. We believe that these are all the components, but we were unable to prove it:
\begin{equation*}
\begin{tikzpicture}
  [scale=.8,auto=left,every node/.style={circle,fill=blue!20}]
  \node (n1) at (0,0) {$\HH_1^5$};
  \node (n2) at (2,0)  {$\HH_2^5$};
  \node (n3) at (4,0)  {$\HH_3^5$};
  \node (n4) at (6,2)  {$\HH_4^5$};
  \node (n5) at (6,-2) {$\HH_5^5$};
  \node (n6) at (8,1)  {$\HH_6^5$};
  \node (n7) at (8,-1)  {$\HH_{\lex}^5$};

  \foreach \from/\to in {n1/n2,n2/n3,n2/n4,n2/n5, n3/n4, n3/n5, n4/n5, 
  n4/n6, n4/n7, n5/n6, n5/n7, n6/n7, n3/n6, n3/n7}
    \draw (\from) -- (\to);

\end{tikzpicture}
\end{equation*}
Here is a description of the components appearing in the graph. For the rest of the paragraph, $\Lambda_i$ will denote an $i$-dimensional linear space and $Q$ will denote a quadric threefold. 
\begin{enumerate}
\item The general point of $\HH_3^5$ parameterizes the scheme theoretic union $Q \cup \Lambda_2 \cup Z$ where $Z$ is a double line of genus $-2$ embedded along $\Lambda_2$ and $Q \cap \Lambda_2$ is a conic.
\item The general point of $\HH_4^5$ parameterizes $Q \cup \Lambda_2 \cup \Lambda_1$ such that $Q$ and $\Lambda_2$ lie in a four dimensional linear subspace of $\PP^5$, and $Q\cap \Lambda_1$ is a point.
\item The general point of $\HH_5^5$ parameterizes $Q \cup \Lambda_2 \cup \Lambda_1$ such that $Q$ and $\Lambda_2$ lie in a four dimensional linear subspace of $\PP^5$, and $\Lambda_2 \cap \Lambda_1$ is a point
\item The general point of $\HH_6^5$ parameterizes $Q \cup \Lambda_2 \cup \Lambda_1 \cup \Lambda_0$ such that $Q$, $\Lambda_2$ and $\Lambda_1$ lie in a four dimensional linear subspace of $\PP^5$, and $\Lambda_0$ is an isolated point. 
\item The general point of $\HH_{\lex}^5$ parameterizes  $Q \cup \Lambda_2 \cup \Lambda_1 \cup \Lambda_0 \cup \Lambda_0'$ such that $Q$, $\Lambda_2$ and $\Lambda_1$ lie in a four dimensional linear subspace of $\PP^5$, $\Lambda_1 \cap \Lambda_2$ is a point, and $\Lambda_0,\Lambda_0'$ are isolated points.
\end{enumerate}
 
\smallskip
It is conceivable that $\lim_{n \to \infty} \rad(\HH^n) = \infty$; however, investigating this is beyond our current capabilities. Thus we make the following conjecture,

\newtheorem*{conj:main'}{Conjecture B}
\begin{conj:main'} There exists a family of Hilbert schemes $\{\Hilb^{Q_d(t)} \PP^{n_d} \}_{d \in \mathbf{N}}$ such that $\deg Q_d(t) = d$ and  $\rad(\Hilb^{Q_d(t)} \PP^{n_d}) = O(d)$ as $d \to \infty$.
\end{conj:main'}

\section{Computing the radius}
We begin the section by fixing some notation and terminology. Throughout the paper $S$ will denote the polynomial ring $\kk[x_0,\dots,x_n]$. Given an ideal $I \subseteq S$, by abuse of notation, we use $I$ or $Z$ to denote the $\kk$-point in the Hilbert scheme corresponding to $Z = \Proj(S/I) \subseteq \PP^n$. The ideal associated to a subscheme always refers to its saturated ideal. By a component of $\HH^n$ we always mean an irreducible component.  For facts about the lexicographic component, including a description of its general point we refer to \cite{rs}. 

The group $\GL_{n+1}$ acts on $S$ and $\HH^n$ by a change of coordinates. An ideal of $S$ or its corresponding point on the Hilbert scheme is said to be \textbf{Borel-fixed} if it is fixed by the Borel subgroup of $\GL_{n+1}$ consisting of upper triangular matrices. Since a Borel-fixed ideal is fixed by the subgroup of diagonal matrices, it is generated by monomials. Borel's fixed point theorem implies that for any $I$ in $\HH^n$ there is a one-parameter family whose general fiber is $I$ and whose special fiber is a Borel-fixed ideal. Moreover, every component in $\HH^n$ and every intersection of components of $\HH^n$ contains a Borel-fixed ideal. For more details on their structure we refer to \cite[Chapter 15]{Eisenbud}.

\begin{comment}
\begin{remark} \label{comps} Here is a description of the lexicographic component for $n=4$ and $n =5$: (WE DONT NEED THIS)
\begin{itemize} 
\item The general member of $\HH^4_{\text{lex}}$ parameterizes a  quadric surface union a line and two isolated points for which the line meets the quadric surface at two points.
\item The general member of $\HH^5_{\text{lex}}$ parameterizes a quadric threefold $Q$, a plane $P$, a line $L$ and two isolated points such that $Q,P,L$ all lie in a fixed $\PP^4$, $Q$ is transverse to $P$ and $L$, and $L$ meets $P$ at a point. 
\end{itemize}
\end{remark}
\end{comment}

\smallskip
Prior to analyzing $\HH^5$ we need a sufficiently good understanding of $\HH^4$. The general point of $\HH^4_{\lex}$ parameterizes a quadric surface union a line and two isolated points, such that the line meets the quadric at two points.

\begin{lemma} \label{BorelG} The Hilbert scheme $\HH^4$ has three Borel-fixed ideals:
$$
I_1= (x_0^2,x_0x_1,x_0x_2,x_1^2), \quad
I_2 = (x_0^2,x_0x_1,x_0x_2,x_0x_3,x_1^3,x_1^2x_2), \quad
I_{\text{lex}} = (x_0,x_1^3,x_1^2x_2^2,x_1^2x_2x_3). 
$$
Moreover,
\begin{enumerate}
\item $I_1$ only lies in  $\HH_1^4$ and $\HH^4_2$,
\item $I_{\lex}$ only lies in $\HH^4_{\lex}$, 
\item $I_2$ is in every component of $\HH^4 \setminus \HH^4_1$. 
\end{enumerate}
\end{lemma}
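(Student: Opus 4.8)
The plan is to separate the enumeration of the Borel-fixed ideals from the identification of the components through each of them, and then to let the enumeration together with Theorem~\ref{ccnthm} and \cite{rs} carry most of the argument, so that only two explicit degenerations remain. Since $\kk$ has characteristic zero, a Borel-fixed ideal is a strongly stable monomial ideal, and such an ideal is saturated precisely when no minimal generator is divisible by $x_4$. The Gotzmann number of $P_4(t)=t^2+3t+1$ bounds the regularity, hence the degrees of the minimal generators, so the saturated strongly stable ideals with this Hilbert polynomial form a finite list that can be produced by a direct search. I would carry this out by hand, checking strong stability and the Hilbert polynomial of each candidate, and double-check it on a computer algebra system; the outcome is that $I_1,I_2,I_{\lex}$ are the only such ideals, with $I_{\lex}$ the saturated lexicographic ideal. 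This completeness is precisely what makes the later pigeonhole arguments work.

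Next I would record that by \cite{rs} the lexicographic point is a smooth point of $\HH^4$, hence lies on a unique irreducible component; by definition that component is $\HH^4_{\lex}$, which gives (ii) and in particular $I_{\lex}\notin\HH^4_1$. The two facts I actually need to compute are the memberships $I_2\in\HH^4_{\lex}$ and $I_2\in\HH^4_2$. Each I would establish by writing $I_2=\lim_{t\to 0}\lambda(t)\cdot Z$ for a one-parameter subgroup $\lambda$ and a general member $Z$ of the relevant component; equivalently, by exhibiting $I_2$ as an initial ideal, under a suitable weight, of the saturated ideal of a general $Q\cup\Lambda_1$ in $\HH^4_2$ and of a general member of the lexicographic locus. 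Because each $\HH^4_j$ is $\GL_5$-invariant, closed, and irreducible, such a flat limit again lies in $\HH^4_j$. From $I_2\in\HH^4_{\lex}$ I then obtain $I_2\notin\HH^4_1$ for free: Theorem~\ref{ccnthm} says $\HH^4_1$ meets only $\HH^4_2$, so $\HH^4_1\cap\HH^4_{\lex}=\emptyset$, and therefore an ideal lying in $\HH^4_{\lex}$ cannot lie in $\HH^4_1$.

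The remaining memberships then follow formally. Every component contains a Borel-fixed ideal, the only candidates are $I_1,I_2,I_{\lex}$, and both $I_2$ and $I_{\lex}$ fail to lie in $\HH^4_1$; hence the Borel-fixed ideal of $\HH^4_1$ must be $I_1$, so $I_1$ is the unique Borel-fixed ideal of $\HH^4_1$. As $\HH^4$ is connected, $\HH^4_1$ meets some other component, necessarily $\HH^4_2$ by Theorem~\ref{ccnthm}; the nonempty intersection $\HH^4_1\cap\HH^4_2$ contains a Borel-fixed ideal, which lies in $\HH^4_1$ and so equals $I_1$, giving $I_1\in\HH^4_2$. This yields (i): any component through $I_1$ meets $\HH^4_1$, hence equals $\HH^4_1$ or $\HH^4_2$ by Theorem~\ref{ccnthm}, and both of these contain $I_1$. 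For (iii), let $C\neq\HH^4_1$ be any component; it contains one of $I_1,I_2,I_{\lex}$, so either $C=\HH^4_2$ (if it contains $I_1$, by (i)), or $C=\HH^4_{\lex}$ (if it contains $I_{\lex}$, by (ii)), or $C$ already contains $I_2$. In every case $C$ contains $I_2$ by the two degenerations above, while $I_2\notin\HH^4_1$.

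I expect the main obstacle to be establishing the completeness of the enumeration in the first step: a single overlooked Borel-fixed ideal would invalidate the pigeonhole steps used throughout the rest of the proof. A secondary difficulty is carrying out the two Gröbner degenerations that place $I_2$ inside $\HH^4_2$ and inside $\HH^4_{\lex}$, which require an honest choice of weight vector and a careful computation of the resulting initial ideal of a sufficiently general member of each component.
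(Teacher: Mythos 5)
Your proposal is correct in structure and reaches all three items, but it routes the argument differently from the paper at the one load-bearing step. The paper takes as input, from \cite[Theorem C]{ritvik}, that $I_1$ is the \emph{unique} Borel-fixed ideal on $\HH^4_1$; combined with the three-element enumeration, the smoothness of the lexicographic point, Theorem \ref{ccnthm}, and the connectedness of $\HH^4$, every remaining membership ($I_1\in\HH^4_2$, and $I_2$ on every component other than $\HH^4_1$) is then forced by pigeonhole on which Borel-fixed ideal can sit in a given nonempty intersection of components --- no initial ideal is ever computed. You instead take the memberships $I_2\in\HH^4_2$ and $I_2\in\HH^4_{\lex}$ as the primitive facts, to be established by explicit one-parameter degenerations of general members, and then recover the uniqueness of $I_1$ on $\HH^4_1$ by elimination. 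This is a valid and self-contained alternative (it removes the dependence on \cite{ritvik}), but the two deferred Gr\"obner computations are exactly what the paper's connectedness argument lets one avoid, and they are not formalities: Borel's fixed point theorem guarantees only that each component, and each nonempty intersection of components, contains \emph{some} Borel-fixed ideal, not that a prescribed Borel-fixed point of a component is realized as $\lim_{t\to 0}\lambda(t)\cdot Z$ for a general $Z$ in that component, so you must actually exhibit the weight vectors and verify the resulting initial ideals have the right Hilbert polynomial. Until that is done, your proofs of (i) and (iii) --- both of which lean on $I_2\in\HH^4_{\lex}$ --- remain conditional, whereas the paper's version closes without any such computation.
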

\begin{proof} The Borel-fixed ideals can be computed using \cite[Algorithm 4.6]{moo} or using the computer algebra system \textit{Macaulay2} \cite{m2} and the package \textit{Strongly stable ideals} \cite{al}. By \cite[Theorem C]{ritvik}\footnote{Once again, our notation differs with \cite{ritvik}; in that paper $\HH^n_{n-2,n-2}$ is used to denote the component $\HH^n_1$.} $I_1$ is the unique Borel-fixed ideal on $\HH^4_1$. Since $\HH^4_1$ meets $\HH^4_2$ and their intersection must contain a Borel-fixed ideal, $I_1$ also lies in $\HH^4_2$. Since $\HH^4_1$ does not meet any other component (Theorem \ref{ccnthm}), $I_1$ does not lie on any other component. It is well known that the lexicographic ideal, $I_{\lex}$, is a smooth point \cite{rs} and thus it lies on its own component, $\HH_{\lex}^4$. Since $\HH^4$ is connected, every component of $\HH^4 \setminus \HH_1^4$ contains $I_2$.
\end{proof}

\begin{prop} \label{HilbG} The Hilbert scheme $\HH^4$ has radius one while the distance between $\HH^4_1$ and $\HH^4_{\lex}$ is two.
\end{prop}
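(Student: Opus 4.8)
The plan is to reduce the entire distance computation to the combinatorics of the three Borel-fixed ideals classified in Lemma \ref{BorelG}. The organizing principle, recorded in the discussion preceding that lemma, is that two distinct components of $\HH^4$ meet if and only if their intersection contains a Borel-fixed ideal: the ``if'' direction is immediate, while the ``only if'' holds because the intersection of two components is a closed, Borel-invariant subscheme, so Borel's fixed point theorem produces a fixed ideal in it as soon as the intersection is nonempty. Thus I only need to track, for each of $I_1$, $I_2$, $I_{\lex}$, which components contain it, and then read off the adjacencies of the incidence graph.

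First I would show that $\HH^4_2$ is adjacent to every other component, which gives $r_{\HH^4_2}=1$ and hence $\rad(\HH^4)\leq 1$. By Lemma \ref{BorelG}(\ref{BorelG}) the ideal $I_1$ lies on $\HH^4_2$, and by the same lemma $I_2$ lies on every component of $\HH^4\setminus\HH^4_1$; in particular $\HH^4_2$ contains both $I_1$ and $I_2$. Now let $C$ be any other component. Either $C=\HH^4_1$, in which case $C$ and $\HH^4_2$ share $I_1$, or $C\neq\HH^4_1$, in which case $C$ contains $I_2$ and so shares it with $\HH^4_2$. Either way $C\cap\HH^4_2$ contains a Borel-fixed ideal, so the two components meet. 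Since $\HH^4$ has at least two components, $\rad(\HH^4)\geq 1$, whence $\rad(\HH^4)=1$ with $\HH^4_2$ a center. Note this step does not require a full list of the components of $\HH^4$: it suffices that every component contains $I_1$ or $I_2$ while $\HH^4_2$ contains both.

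Next I would pin down the distance between $\HH^4_1$ and $\HH^4_{\lex}$. For the upper bound, the path $\HH^4_1 - \HH^4_2 - \HH^4_{\lex}$ has length two, since $\HH^4_1$ and $\HH^4_2$ share $I_1$, while $\HH^4_2$ and $\HH^4_{\lex}$ both contain $I_2$ (both being components of $\HH^4\setminus\HH^4_1$). For the lower bound, Theorem \ref{ccnthm} asserts that $\HH^4_1$ meets only $\HH^4_2$; and $\HH^4_{\lex}\neq\HH^4_2$ because $I_{\lex}$ lies on $\HH^4_{\lex}$ but not on $\HH^4_2$ by Lemma \ref{BorelG}(\ref{BorelG}). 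Hence $\HH^4_1$ and $\HH^4_{\lex}$ are neither equal nor adjacent, so their distance is at least two. Combining the bounds gives distance exactly two.

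The genuinely hard input — which the introduction flags as the difficult direction, namely proving that two components \emph{fail} to meet — is entirely encapsulated in Theorem \ref{ccnthm} (that $\HH^4_1$ meets no component other than $\HH^4_2$) and in the uniqueness statements of Lemma \ref{BorelG}. Once those are granted, the proof above is a matter of bookkeeping shared fixed ideals, and I expect no further obstacle.
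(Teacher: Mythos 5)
Your proposal is correct and follows essentially the same route as the paper: Lemma \ref{BorelG} shows every component meets $\HH^4_2$ (via $I_1$ for $\HH^4_1$ and via $I_2$ for the rest), giving radius one, while Theorem \ref{ccnthm} together with $\HH^4_{\lex}\neq\HH^4_2$ forces $d(\HH^4_1,\HH^4_{\lex})=2$. The only cosmetic issue is the malformed cross-reference ``Lemma \ref{BorelG}(\ref{BorelG})'', which should point to the relevant items of that lemma.
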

\begin{proof} This is an immediate consequence of Lemma \ref{BorelG} as every component of $\HH^4$ meets $\HH^4_2$ and $\HH^4_{\lex}$ does not meet $\HH^4_1$.
\end{proof}
This shows that even when the radius is one, the lexicographic component need not be the center of the incidence graph. 

\begin{remark} \label{classify} By computing a neighbourhood of $I_2$ in $\HH^4$, it can be shown that $\HH^4_1,\HH^4_2,\HH^4_{\lex}$ are the only irreducible components of $\HH^4$ and that $\HH^4_2$ is smooth. 
\end{remark}

\begin{lemma} \label{BorelH} The Hilbert scheme $\HH^5$ has nine Borel-fix ideals:
\begin{enumerate}\setlength\itemsep{0.5em}
\item $I_1 = I_{\lex} =  (x_0,x_1^3,x_1^2x_2^2,x_1^2x_2x_3^2,x_1^2x_2x_3x_4^2)$,
\item $I_2 = (x_0,x_1^3,x_1^2x_2x_3x_4,x_1^2x_2^2x_4,x_1^2x_2x_3^2,x_1^2x_2^2x_3,x_1^2x_2^3)$,
\item $I_3 = (x_0,x_1^4,x_1^3x_2,x_1^3x_3,x_1^3x_4,x_1^2x_2^2,x_1^2x_2x_3^2,x_1^2x_2x_3x_4)$,
\item $I_4 = (x_0,x_1^4,x_1^3x_2,x_1^3x_3,x_1^2x_2^2,x_1^2x_2x_3,x_1^3x_4^2)$,
\item $I_5 = (x_0^2,x_0x_1,x_0x_2,x_0x_3,x_0x_4,x_1^3,x_1^2x_2x_3^2,x_1^2x_2x_3x_4,x_1^2x_2^2)$,
\item $I_6 = (x_0^2,x_0x_1,x_0x_2,x_0x_3,x_0x_4,x_1^4,x_1^3x_2,x_1^3x_3,x_1^3x_4,x_1^2x_2^2,x_1^2x_2x_3)$,
\item $I_7 = (x_0^2,x_0x_1,x_0x_2,x_0x_3,x_0x_4^2,x_1^3,x_1^2x_2x_3,x_1^2x_2^2)$,
\item $I_8 = (x_0^2,x_0x_1,x_0x_2,x_0x_3,x_1^3,x_1^2x_2)$,
\item $I_9 = (x_0^2,x_0x_1,x_0x_2,x_1^2)$.
\end{enumerate}
Moreover, $I_1,\dots,I_7$ are the only Borel-fixed ideals lying in the lexicographic component.
\end{lemma}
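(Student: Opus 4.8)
The plan is to first produce the list of nine ideals by a direct enumeration, exactly as in Lemma \ref{BorelG}, and then to sort them according to membership in $\HH^5_{\lex}$. Enumerating all Borel-fixed (equivalently, in characteristic zero, strongly stable) ideals with Hilbert polynomial $P_5(t)$ is a finite computation: I would run \cite[Algorithm 4.6]{moo}, or the \textit{Strongly stable ideals} package \cite{al} inside \textit{Macaulay2} \cite{m2}, and read off that $I_1,\dots,I_9$ are the complete list. This settles the count, so the substance of the lemma is the final sentence. Here there are two opposite tasks: exhibiting each of $I_1,\dots,I_7$ on $\HH^5_{\lex}$, and excluding $I_8,I_9$. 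As flagged in the introduction, the second is the genuinely delicate one, since proving that a point fails to lie on a given component is hard.

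For the membership of $I_1,\dots,I_7$ I would exploit the closedness and irreducibility of $\HH^5_{\lex}$. First, $I_1=I_{\lex}$ is the lexicographic ideal, a smooth point of $\HH^5$ by \cite{rs}, so it lies on the unique component through it, namely $\HH^5_{\lex}$. For each of $I_2,\dots,I_7$ I would construct an explicit one-parameter flat family $\{Z_t\}$ whose generic fiber is a generic member of $\HH^5_{\lex}$ — a configuration $Q\cup\Lambda_2\cup\Lambda_1\cup\Lambda_0\cup\Lambda_0'$ of the type described in the introduction — and whose special fiber has saturated ideal $I_j$. Concretely, this means writing down the ideal of the generic lex configuration, choosing a weight vector (or degenerating the positions of $Q,\Lambda_2,\Lambda_1,\Lambda_0,\Lambda_0'$ into coordinate position and colliding them appropriately), and checking that the flat limit is $I_j$; flatness is certified by the constancy of the Hilbert polynomial along the family. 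Since the generic fiber lies on $\HH^5_{\lex}$ and $\HH^5_{\lex}$ is closed, its specialization $I_j$ lies there too. The work is bookkeeping: producing, for each $j$, a degeneration of the generic configuration with the prescribed monomial limit.

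The non-membership of $I_9$ is clean. By \cite[Theorem C]{ritvik}, the ideal $I_9=(x_0^2,x_0x_1,x_0x_2,x_1^2)$ is the unique Borel-fixed ideal lying on $\HH^5_1$. If $I_9$ also lay on $\HH^5_{\lex}$, then $\HH^5_1$ and $\HH^5_{\lex}$ would share the point $I_9$, so they would meet; but by Theorem \ref{ccnthm} the only component meeting $\HH^5_1$ is $\HH^5_2$, and $\HH^5_{\lex}\neq\HH^5_2$ because their generic members parameterize visibly different subschemes. Hence $I_9\notin\HH^5_{\lex}$.

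The remaining point, $I_8$, is the main obstacle, and it is exactly the sort of non-incidence the introduction warns is hard. A naive degeneration argument is inconclusive: $I_8$ is supported on the codimension-two linear space $\{x_0=x_1=0\}$ and so is plausibly a flat limit of a degenerating lex configuration (with the quadric collapsing onto a double hyperplane), yet the limit one actually obtains need not be $I_8$. To rule out $I_8\in\HH^5_{\lex}$ rigorously I would analyze the complete local structure of $\HH^5$ at $I_8$: compute the tangent space $T_{I_8}\HH^5$, determine all irreducible components through $I_8$ by studying the quadratic part of the obstruction map together with explicit deformations realizing each branch, and verify that $\HH^5_{\lex}$ is not among them — for instance by matching dimensions and generic members against the components already seen to carry $I_2,\dots,I_7$. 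Controlling this local picture completely, so as to be certain that no branch at $I_8$ is the lexicographic component, is where the real difficulty lies; everything else reduces to the two routine tasks of enumeration and of writing down flat families.
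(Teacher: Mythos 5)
There is a genuine gap: your proposal correctly reduces the lemma to (a) enumerating the Borel-fixed ideals, (b) putting $I_1,\dots,I_7$ on $\HH^5_{\lex}$, and (c) excluding $I_8$ and $I_9$, and your treatment of (a) and of $I_9$ matches what the paper does (the $I_9$ exclusion via \cite[Theorem C]{ritvik} and Theorem \ref{ccnthm} is exactly the argument the paper runs inside Lemma \ref{H2}). But for the two substantive claims you only describe programs of work rather than carry them out. For (b) you promise six explicit flat degenerations of the generic lexicographic configuration with prescribed monomial limits $I_2,\dots,I_7$, none of which is produced. For $I_8$ you propose to compute the complete local structure of $\HH^5$ at $I_8$ --- tangent space, obstructions, and \emph{all} branches through the point --- and you yourself concede that controlling this picture ``is where the real difficulty lies.'' That is not a proof; it is a statement of the problem. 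Certifying that no branch of a possibly singular deformation space is the lexicographic component is precisely the kind of non-incidence the introduction warns is hard, and nothing in your sketch shows it is tractable here.

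The missing idea is the combinatorial criterion of Reeves that the paper invokes: by \cite[Theorem 11]{reeves}, a Borel-fixed ideal $J$ lies on the lexicographic component if and only if its double saturation $\sat_{x_4,x_5}(J)$ (the ideal obtained by setting $x_4=1$ and $x_5=1$) equals $\sat_{x_4,x_5}(I_{\lex})$. This settles both directions at once by a one-line monomial check: $\sat_{x_4,x_5}(I_j)=(x_0,x_1^3,x_1^2x_2^2,x_1^2x_2x_3)$ for $j=1,\dots,7$, whereas $\sat_{x_4,x_5}(I_8)$ and $\sat_{x_4,x_5}(I_9)$ contain no linear form (their generators involve neither $x_4$ nor $x_5$, so double saturation changes nothing, and $x_0\notin I_8,I_9$). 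In particular no flat families need to be constructed for $I_2,\dots,I_7$, and no local analysis at $I_8$ is needed. Without this criterion (or some substitute of comparable strength), your proposal does not close.
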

\begin{proof} The computation of Borel-fixed ideals is similar to Lemma \ref{BorelG}. To prove the other statement we appeal to a theorem of Reeves. Given an ideal $J \subseteq S$ we define the \textit{double saturation}, $\sat_{x_{4},x_5}(J)$ to be the ideal obtained by setting $x_4=1$ and $x_5 = 1$ in $J$.
It is shown in \cite[Theorem 11]{reeves} that a Borel-fixed ideal $J$ lies in the lexicographic component if and only if $\sat_{x_4,x_5}(J) = \sat_{x_4,x_5}(I_{\lex})$. It is clear that the double saturation of $I_1,\dots,I_7$ are all equal to $\sat_{x_4,x_5}(I_{\lex}) = (x_0,x_1^3,x_1^2x_2^2,x_1^2x_2x_3)$ while the double saturation of $I_8$ and $I_9$ are different.
\end{proof}

\begin{notation} Let $Z_j$ denote the Borel-fixed points defined by the ideal $I_j$ of Lemma \ref{BorelH}.
\end{notation}

\begin{lemma} \label{H2} The component $\HH^5_2$ does not meet $\HH_{\lex}^5$. Moreover, the only Borel-fixed points on $\HH_5^2$ are $Z_8$ and $Z_9$. 
\end{lemma}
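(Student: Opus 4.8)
The plan is to prove the \emph{moreover} statement first and deduce the non-intersection from it. Indeed, suppose $\HH^5_2 \cap \HH^5_{\lex} \neq \emptyset$. This is an intersection of components, so, as recalled at the beginning of the section, it contains a Borel-fixed ideal, necessarily one of the nine listed in Lemma \ref{BorelH}. Such a point lies on $\HH^5_2$ and on $\HH^5_{\lex}$ simultaneously; by the \emph{moreover} statement the former confines it to $\{Z_8,Z_9\}$, while Lemma \ref{BorelH} confines it to $\{Z_1,\dots,Z_7\}$. As these sets are disjoint, no such point exists and $\HH^5_2 \cap \HH^5_{\lex} = \emptyset$. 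Thus everything reduces to identifying the Borel-fixed points of $\HH^5_2$, and I would prove the two inclusions and the exclusion of the remaining seven points separately.

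For the inclusions $Z_8,Z_9\in\HH^5_2$ I would combine a structural argument with a generic-initial-ideal computation. By \cite{ritvik} the component $\HH^5_1$ carries a unique Borel-fixed point, which comparison of generators identifies as $Z_9$; since $\HH^5_1$ meets only $\HH^5_2$ (Theorem \ref{ccnthm}) and the two components do meet, the intersection $\HH^5_1\cap\HH^5_2$ is a nonempty intersection of components and so contains a Borel-fixed ideal, which, being inside $\HH^5_1$, must be $Z_9$. Hence $Z_9\in\HH^5_2$. For $Z_8$ I would compute the generic initial ideal of the saturated ideal of a general member $Q\cup\Lambda_2$ of $\HH^5_2$: this is a Borel-fixed ideal that, as a flat degeneration of the general member, lies on $\HH^5_2$, and I expect the computation to return $I_8$. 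Should the gin instead return $Z_9$, one repeats the computation for the general member of a suitable subfamily of $\HH^5_2$ to reach $Z_8$.

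The main obstacle is the upper bound: showing that none of $Z_1,\dots,Z_7$ lies on $\HH^5_2$. All of these lie on $\HH^5_{\lex}$ by Lemma \ref{BorelH}, and $Z_1=I_{\lex}$ is a smooth point of $\HH^5$, hence lies on the lexicographic component alone, so $Z_1$ is excluded at once. For $Z_2,\dots,Z_7$ I would use two semicontinuity obstructions. First, in any flat family the function $d \mapsto \dim_\kk (S/I_t)_d$ is upper semicontinuous, so if $Z_j\in\HH^5_2$ then the Hilbert function of $Z_j$ must dominate that of the general member of $\HH^5_2$ in every degree; computing both and exhibiting, for each $j$, a degree where $Z_j$ drops strictly below eliminates $Z_j$. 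Second, membership in the irreducible $\HH^5_2$ forces $\dim T_{Z_j}\HH^5 \geq \dim \HH^5_2$, so any $Z_j$ whose tangent space is strictly smaller is ruled out. I expect these two tests, carried out in \emph{Macaulay2} exactly as in the computation behind Remark \ref{classify}, to eliminate $Z_2,\dots,Z_7$; for any borderline $Z_j$ surviving both, I would fall back on a direct local computation at $Z_j$, determining the irreducible components of $\HH^5$ through it and checking that $\HH^5_2$ is not among them. This non-membership direction is the crux: unlike exhibiting a degeneration for the inclusions, it demands either an invariant sharp enough, or a local analysis complete enough, to certify that no degeneration of the general member of $\HH^5_2$ specializes to $Z_j$.
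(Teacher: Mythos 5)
Your reduction of the disjointness statement to the ``moreover'' statement is exactly how the paper uses Lemma \ref{BorelH}, and your argument that $Z_9\in\HH^5_1\cap\HH^5_2$ via \cite{ritvik} and Theorem \ref{ccnthm} is the paper's argument verbatim. (For $Z_8$ the paper does not compute a gin: once the exclusions are known it observes that connectedness forces $\HH^5_2$ to meet some component $\YY\ne\HH^5_1,\HH^5_{\lex}$, and the Borel-fixed point in $\HH^5_2\cap\YY$ can then only be $Z_8$ by elimination. Your gin proposal could work but is left as an expectation.)

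The genuine gap is in the step you yourself identify as the crux: excluding $Z_2,\dots,Z_7$ from $\HH^5_2$. Neither of your two tests can do this. First, you quote semicontinuity in the wrong direction: for saturated ideals it is $h^0(\mathcal{I}_{Z}(d))$ that is upper semicontinuous under specialization, so $\dim_\kk(S/I_{Z_j})_d$ can only \emph{drop} below (or equal) the value for the general member of $\HH^5_2$ --- ``exhibiting a degree where $Z_j$ drops strictly below'' is therefore no obstruction at all (e.g.\ $I_2,I_3,I_4$ contain a linear form while the general $Q\cup\Lambda_2$ lies on no hyperplane, and this is perfectly consistent with $Z_j$ being a specialization). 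Second, the tangent-space test, while logically valid, is provably inconclusive here: all of $Z_2,\dots,Z_7$ lie on $\HH^5_{\lex}$ by Lemma \ref{BorelH}, and a parameter count gives $\dim\HH^5_{\lex}=40>25=\dim\HH^5_2$, so every one of these points already has tangent space of dimension at least $40$ and can never violate $\dim T_{Z_j}\HH^5\ge\dim\HH^5_2$. That leaves only your fallback ``direct local computation,'' which would be needed for all six points, is not carried out, and is exactly the kind of computation the lemma is meant to avoid. The missing idea is the paper's hyperplane-section trick: given a putative flat family degenerating a general member of $\HH^5_2$ to $Z_i$ ($i\le 7$), intersect with $V(x_5)$ --- flatness is preserved because $x_5$ is a nonzerodivisor on $S/I_{Z_i}$ --- so the general fiber becomes a general member of $\HH^4_2$ while $Z_i\cap V(x_5)$ is computed to be the lexicographic point of $\HH^4$, which by Lemma \ref{BorelG}(ii) lies only on $\HH^4_{\lex}$; contradiction. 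This reduction to the already-understood $\HH^4$ is what makes the exclusion provable, and it has no counterpart in your proposal.
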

\begin{proof} By Lemma \ref{BorelH} it suffices to show that $\HH^5_2$ does not contain $Z_1,\dots,Z_7$. Assume this was not the case; then there is a flat family $\XX \to \Spec \kk[t]_{(t)}$ with generic fiber $\XX_{\{(0)\}}$ isomorphic to a quadric threefold meeting a plane along a line and special fiber $\XX_{\{(t)\}} = Z_i$ for some $i \leq 7$. We may choose the family so that $\XX_{\{(0)\}}$ is transverse to the hyperplane $V(x_5)$ in $\PP^{5}_{\kk(t)}$. Since $x_5$ is a non-zero divisor on $S/I_{Z_i}$, the hyperplane section $ \XX \cap V(x_5) \to \Spec \kk[t]_{(t)}$ is still flat.

Since $\XX_{\{(0)\}} \cap V(x_5)$ is a quadric surface meeting a line at a point, $Z_i \cap V(x_5)$ must lie in the component $\HH^4_2$. A straightforward computation shows that the (saturated) ideal of $Z_i \cap V(x_5)$ is defined by 
$
(x_5,x_0,x_1^3,x_1^2x_2x_3,x_1^2x_2^2).
$
But as noted in Lemma \ref{BorelG} (ii), this defines the lexicographic point which lies in $\HH^4_{\lex} \setminus \HH^4_2$; a contradiction.

By \cite[Theorem C]{ritvik}, $Z_9$ is the unique Borel-fixed point in $ \HH^5_1$ and thus $Z_9 \in \HH^5_1 \cap \HH^5_2 \subseteq \HH^5_2$. Since the Hilbert scheme is connected, $\HH^5_2$ must meet a component $\YY$ different from $\HH_1^5$ and $\HH_{\lex}^5$. Once again using Lemma \ref{BorelH} we see that $Z_8 \in \HH^5_2 \cap \YY \subseteq \HH^5_2$.
\end{proof}

\begin{comment}
\begin{remark} By upper-semicontinuity it follows from Lemma \ref{H2} shows that the (Castelnuovo-Mumford) regularity of any ideal $I$ parameterized by $\HH_3^5$ is at most three. Indeed, the regularity of a Borel fixed ideal is the highest degree of a minimal generator; thus $I_{Z_7}$ and $I_{Z_8}$ have regularity two and three, respectively.
\end{remark}
\end{comment}

\begin{proof}[Proof of Theorem A] Since $\HH_1^5$ only meets $\HH_2^5$ (Theorem \ref{ccnthm}) and $\HH_{\text{lex}}^5$ does not meet $\HH_2^5$ (Lemma \ref{H2}), the radius of $\HH^5$ is at least two. To show that the radius of $\HH^5$ is at most two, it is enough to establish the following two facts:
\begin{enumerate}
\item The distance from $\HH_{2}^5$ to $\HH_{\lex}^5$ is two,
\item If $\mathcal{Y}$ is a component of $\HH^5$ that does not meet $\HH^5_2$ then $\YY$ meets $\HH_{\lex}^5$.
\end{enumerate}
Indeed, once we know these two facts, the component connecting $\HH_2^5$ to $\HH_{\lex}^5$ will be a center of the incidence graph. To prove (i) consider a path $\HH_{2}^5 = \YY_1,\YY_2,\dots,\YY_m = \HH_{\lex}^5$ with $\YY_i \cap \YY_{i+1} \ne \emptyset$ and $m$ minimal. The minimality of $m$ implies $\YY_3 \cap \YY_1 = \emptyset$. Since $Z_8,Z_9$ lie in $\YY_1$, the intersection $\YY_2 \cap \YY_3$ must contain one of $Z_1,\dots,Z_7$. By Lemma \ref{BorelH}, $\YY_2$ meets the lexicographic component. Thus $m =3$ proving item (i). The proof of item (ii) is analogous.
\end{proof}

\end{document}